\newcommand{\cosec}{\text{cosec}}
\numberwithin{equation}{section}
\theoremstyle{plain}
\newtheorem{theorem}{Theorem}[section]
\newtheorem{lemma}[theorem]{Lemma}
\newtheorem{proposition}[theorem]{Proposition}
\newtheorem*{definition*}{Definition}
\begin{document}
	\title[The Piltz divisor Problem in Number Field
	]{The Piltz divisor Problem in Number Fields Using The Resonance Method} 
	
	\author{Nilmoni Karak}
	\address{Nilmoni Karak\\ Department of Mathematics \\
		Indian Institute of Technology Kharagpur\\
		Kharagpur-721302,  India.}
	\email{nilmonikarak@gmail.com,  nilmonimath@kgpian.iitkgp.ac.in}
	
	\author{Kamalakshya Mahatab}
	\address{Kamalaksya Mahatab\\ Department of Mathematics \\
		Indian Institute of Technology Kharagpur\\
		Kharagpur-721302,  India.}
	\email{  kamalakshya@maths.iitkgp.ac.in}
	
	\thanks{2020 \textit{Mathematics Subject Classification.} 11R42, 11P21, 11N37\\}

	\begin{abstract}
The Piltz divisor problem is a natural generalization of the classical Dirichlet divisor problem. In this paper, we study this problem over number fields and obtain improved 
$\Omega-$bounds for its error terms. Our approach involves generalizing a Voronoi-type formula due to Soundararajan in the number field setting, and applying a recent result due to the second author. 
	\end{abstract}
	\maketitle
	
	\section{Introduction}
	The study of error terms in various counting functions is a central theme in analytic number theory. The Dirichlet divisor problem and the Gauss circle problem are two classical examples that ask to estimate the discrepancy between an arithmetic counting function and its asymptotic approximation. 
	The error terms of the above said problems are 
	\begin{equation}\label{Circle and Divisor Problem}
\Delta(x) = \sum_{n\leq x} d(n) -x\log x -(2\gamma -1)x,	 \ \ \ \text{and} \ \ \	R(x) = \sum_{\substack{{a^2+b^2\leq x}\\ a,b \, \in \mathbb{Z}}}1 - \pi x,
	\end{equation}
respectively, where $d(n)$ counts the number of divisors for a positive integer $n$ and $\gamma=0.5772\cdots$ is the Euler's constant. Both, the Gauss circle problem and  the Dirichlet divisor problem seek upper bounds of the form\footnote{The symbol $\epsilon$ always denotes an arbitrary small positive constant, whose value may varry as needed.}  $R(x)\ll x^{\theta_1+\epsilon}$ and $\Delta(x)\ll x^{\theta_2 +\epsilon}$, with $\theta_i$  as small as possible. It is conjectured that $\theta_i =1/4$ for both the problems. The sharpest known value of $\theta_i$, due to Huxley \cite{Huxley 2, Huxley 3}, has long been $\theta_i = \frac{131}{416} = 0.3149\cdots$. However, Li and Yang \cite{Li- Yang } recently refined this result by improving the bound at the fourth decimal place, showing that $\theta = 0.314483\cdots$.
 It is also of interest to understand how large $R(x)$ and $\Delta(x)$ could be as $x \to \infty$. In this direction, Soundararajan \cite{Sound} established \footnote{Throughout this paper we denote  $\log_2 X = \log \log X$ and $\log_3 X= \log \log \log X$.}  
	\begin{equation*}
		\Delta(x) = \Omega\left((x\log x)^{1/4} (\log_2 x)^{\frac{3}{4}(2^{\frac{4}{3}-1})}(\log_3 x)^{-5/8}\right)
	\end{equation*}
	and 
		\begin{equation*}
		R(x) = \Omega\left((x\log x)^{1/4} (\log_2 x)^{\frac{3}{4}(2^{\frac{4}{3}-1})}(\log_3 x)^{-5/8}\right).
	\end{equation*}
	Using the resonance method, the second author \cite{K. Mahatab} has improved these $\Omega$-bounds by raising the exponent of $\log_3 x$  from $-5/8$ to $-3/8$. The aim of this article is to extend the approach in \cite{K. Mahatab} in number fields, which includes the Gauss circle problem and the Dirichlet divisor problem.

	Let $\mathbb{K}$ be an algebraic number field of degree $[\mathbb{K}: \mathbb{Q}] =m=r_1+2r_2$, where $r_1$ and $r_2$ denote the number of real and complex embeddings of $\mathbb{K}$, respectively, and $\mathcal{O}_{\mathbb{K}}$ be its ring of integers. Let $D$ be the absolute value of the discriminant of $\mathbb{K}$ and $N(\mathfrak{a})$ denotes the absolute norm of $\mathfrak{a}$. Then the Dedekind zeta function associated to $\mathbb{K}$ is defined as 
	\begin{equation*}
		\zeta_{\mathbb{K}}(s)= \sum_{\mathfrak{a}\subset \mathcal{O}_{\mathbb{K}}} \frac{1}{N(\mathfrak{a})^s} = \prod_{\mathfrak{p}\subset \mathcal{O}_{\mathbb{K}}} \left(1-\frac{1}{N(\mathfrak{p})^s}\right)^{-1} ,
	\end{equation*} 
for all $s\in \mathbb{C}$ with $\text{Re}(s)>1$, 
	where the sum and the product are taken over all non zero integral ideals $\mathfrak{a}$ and prime ideals $\mathfrak{p}$ of  $\mathcal{O}_{\mathbb{K}}$, respectively. The function $\zeta_{\mathbb{K}}(s)$ can be continued analytically to a meromorphic function with a unique simple pole at $s=1$. Moreover, it satisfies the functional equation $\zeta_{\mathbb{K}}(s)= \chi_{\mathbb{K}}(s) \zeta_{\mathbb{K}}(1-s)$, where
	\begin{equation} \label{chi_K(s)}
		\chi_{\mathbb{K}}(s) = 2^{m(s-1)} \pi^{ms} D^{1/2-s} \cfrac{\sec^{r_1+r_2}(\pi s/2)\,\cosec^{r_2}(\pi s/2)}{\Gamma(s)^m}.
	\end{equation}

The number field analogue of $k$-fold divisor function is defined by 
\begin{equation}
	d_{\mathbb{K}}^{(k)}(n) = \sum_{N(\mathfrak{a}_1 \mathfrak{a}_2 \cdots \mathfrak{a}_k) = n} 1,
\end{equation}
where the sum is taken over $k $-tuples of ideals in the number field $ \mathbb{K}$ whose product has norm $n$. The arithmetic function
$d_{\mathbb{K}}^{(k)}(n)$ satisfies the following identity:
	\begin{equation*}
		\zeta_{\mathbb{K}}^{k}(s) =\sum_{n=1}^{\infty} \frac{d_{\mathbb{K}}^{(k)}(n)}{n^s},  \ \ \ \ \ \ \  \ \text{Re}(s)>1.
	\end{equation*} 
	In analogy to the classical Dirichlet divisor problem in \eqref{Circle and Divisor Problem}, our aim is to investigate the behavior of the error term in the asymptotic formula
	\begin{equation}\label{Asymp of sum d_K}
		\Delta_{\mathbb{K}}^{(k)}(x)=\sum_{n\leq x} d_{\mathbb{K}}^{(k)}(n) - \underset{w=1}{\operatorname{Res}} \left(\zeta_{\mathbb{K}}^k(w)\frac{x^w}{w}\right),
	\end{equation}
	as $x\rightarrow \infty$. The problem of evaluating the error term $\Delta_{\mathbb{Q}}^{(k)}$ is called the \emph{Piltz divisor problem}.  There are very few results on this problem over number field. Nowak \cite{Nowak_1993} showed that the upper bound estimate for $\Delta_{\mathbb{K}}^{(k)}$ is as follows:
\begin{equation*}
	\Delta_{\mathbb{K}}^{(k)}(x) \ll
	\begin{cases}
		x^{1 - \frac{2}{mk} + \frac{8}{mk(5mk + 2)}} (\log x)^{k - 1 - \frac{10(k - 2)}{5m + 2}},
		& \text{for } 3 \leq mk \leq 6, \\
		x^{1 - \frac{2}{mk} + \frac{3}{2m^2k^2}} (\log x)^{k - 1 - \frac{2(k - 2)}{mk}},
		& \text{for } mk \geq 7,
	\end{cases}
\end{equation*}
where $m=[\mathbb{K}: \mathbb{Q}] \geq 2$. Recently, Takeda \cite{Takeda} improved this upper bound, showing  that 
\begin{equation*}
	\Delta_{\mathbb{K}}^{(k)}(x) \ll_{\epsilon} x^{(2mk-3)/(2mk+1)+\epsilon} D^{2k/(2mk+1)+\epsilon},
\end{equation*} holds for every $mk\geq 4$. 

For the lower bound estimate, we need to introduce some notations from algebraic number theory. Most of the $\Omega$-estimates for $ \Delta_{\mathbb{K}}^{(k)}$ depend on Dirichlet densities.
 For $0\leq \nu \leq m$, $P_{\nu}$ denotes the set of all rational primes which are unramified in $\mathbb{K}$  and which split into exactly $\nu$ distinct $\mathcal{O}_{\mathbb{K}}$-prime ideals of degree $1$. To define the Dirichlet density $\delta_{\nu}$ precisely, we set 
$\mathbb{K}^{G}$ to be the Galois closure of $\mathbb{K}/\mathbb{Q}$, $G=\text{Gal}\left(\mathbb{K}^G/\mathbb{Q}\right)$ its Galois group, and $H= \text{Gal}\left(\mathbb{K}^G/\mathbb{K}\right)$ the subgroup of $G$ corresponding to $\mathbb{K}$. Then the Dirichlet densities $\delta_{\nu}$ are defined as
\begin{equation} \label{Def of Density}
	\delta_{\nu}= \cfrac{\left| \left\{ \sigma\in G: \left| \left\{\psi \in G: \sigma \in \psi H\psi^{-1}\right\}\right| = \nu \left|H\right|\right\} \right|}{\left|G\right| },
\end{equation}
	and these constants satisfy the condition $\sum_{\nu=1}^m \nu \delta_{\nu}=1$. Let $R$ be the cardinality of the set $\mathcal{I} = \{1\leq \nu \leq m: \delta_{\nu}>0\}$. 
	In this setup, Szeg\"o{} and Walfisz \cite{Szego-Walfisz, Szego-Walfisz (2)} first derived the estimate 
	\begin{equation*}
		\Delta_{\mathbb{K}}^{(k)}(x) = \Omega_{*}\left((x\log x)^{\frac{mk-1}{2mk}}(\log_2 x)^{k-1}\right).
	\end{equation*}
	Later, Hafner \cite{Halfner(2)} successfully improved this lower bound and proved 
	\begin{equation*}
		\Delta_{\mathbb{K}}^{(k)}(x) = \Omega_{*}\left( (x\log x)^{\frac{mk-1}{2mk}} (\log_2 x)^{\beta^{'}} \exp(-c_3\sqrt{\log_3x})\right),
	\end{equation*}
	where $c_3$ is some positive constant and 
	\begin{equation*}
		\beta' = \frac{mk-1}{2mk} \left( \left(\sum_{\nu=1}^{k}\delta_{\nu}\, \nu \log \nu\right)k +k\log k -k +1\right)+k -1.
	\end{equation*}
	Here, the $\Omega$-estimates are defined as follows:
\begin{align*}
	\Omega_{*} = 
	\begin{cases}
		\Omega_{\pm}, & \text{if } mk \geq 4 \text{ or } \mathbb{K} \text{ is cubic and not totally real}, \\
		\Omega_{+}, & \text{if } k = 2,3 \text{ and } \mathbb{K} = \mathbb{Q}, \text{ or } k = 1, m = 2,3 \text{ and } \mathbb{K} \text{ is totally real},\\
		\Omega_{-}, & \text{if } k = 1 \text{ and } \mathbb{K} \text{ is quadratic imaginary}.
	\end{cases}
\end{align*}
Using Soundararajan's \cite{Sound} method, Girstmair et al.\ \cite{GKMN} refined Hafner's \cite{Halfner(2)} $\Omega$-result.
 Indeed, they showed that 
\begin{equation}\label{Omega bound of Girstmair et al}
	\Delta_{\mathbb{K}}^{(k)}(x) = \Omega\Big( (x\log x)^{\frac{mk-1}{2mk}} (\log_2x)^{\beta} (\log_3 x)^{\gamma'} \Big),
\end{equation}
	with
\begin{equation}\label{beta and gamma}
	\beta = \frac{mk+1}{2mk} \left( \sum_{\nu =1}^{m}\delta_{\nu} (k\nu )^{\frac{2mk}{mk+1}}-1\right), \ \  \ \gamma'= - \frac{mk+1}{4mk}{R} - \frac{mk-1}{2mk}.
\end{equation}
Motivated by the approach in \cite{AMM}, the second author uses resonance method to general exponential sums and gives detailed lower bounds for these sums in \cite[Theorem 5]{K. Mahatab}. We use this to improve the $\Omega$-bounds in \eqref{Omega bound of Girstmair et al}, which yields our main result:
	\begin{theorem} \label{Main Result}
	Let $\mathbb{K}$ be any arbitrary algebraic number field of degree $m$ and $k$ be any positive integer with $mk\geq 2$. Then, for $x \rightarrow \infty$, the error term $\Delta_{\mathbb{K}}^{(k)}(x)$ in \eqref{Asymp of sum d_K} satisfies 
	\begin{equation} \label{final result}
		\Delta_{\mathbb{K}}^{(k)}(x) = \Omega\Big( (x\log x)^{\frac{km-1}{2km}} (\log_2 x)^{\beta} (\log_3 x)^{\gamma} \Big),
	\end{equation} 
	where 
	\begin{equation*}
	\beta = \frac{mk+1}{2mk} \left( \sum_{\nu =1}^{m}\delta_{\nu} (k\nu )^{\frac{2mk}{mk+1}}-1\right) \hspace{0.5cm}\text{and} \hspace{0.5cm}\gamma= - \frac{km+1}{4mk}{R}.
	\end{equation*}
 Moreover, if $r_1$ denotes the number of real conjugates of  $\mathbb{K}$,
 in \eqref{final result} we may replace $\Omega$ with $\Omega_{+}$  when $kr_1 \equiv 3 \,(\mathrm{mod} 8)$ and $\Omega$ with $\Omega_{-}$ when $kr_1 \equiv 7 \,(\mathrm{mod} 8)$.
	\end{theorem}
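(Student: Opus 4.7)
The plan is to reduce the $\Omega$-bound problem for $\Delta_{\mathbb{K}}^{(k)}(x)$ to a lower bound for an explicit exponential sum (a generalized Voronoi series) via a truncated summation formula, and then to extract that lower bound by means of the resonance method.

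First, I would establish a Voronoi-type identity for $\Delta_{\mathbb{K}}^{(k)}$ in the spirit of Soundararajan's formula for $\Delta(x)$. Starting from the Mellin-Perron representation of $\sum_{n\leq x} d_{\mathbb{K}}^{(k)}(n)$ and shifting the contour across the pole at $s=1$, the functional equation $\zeta_{\mathbb{K}}(s)^k = \chi_{\mathbb{K}}(s)^k\, \zeta_{\mathbb{K}}(1-s)^k$ together with Stirling's formula applied to \eqref{chi_K(s)} converts the contour integral into a rapidly convergent series. The outcome should be an approximation of the shape
\begin{equation*}
\Delta_{\mathbb{K}}^{(k)}(x) \;=\; C_{\mathbb{K},k}\, x^{\frac{mk-1}{2mk}} \!\!\sum_{n\le N}\! \frac{d_{\mathbb{K}}^{(k)}(n)}{n^{\frac{mk+1}{2mk}}}\cos\!\Bigl(2\pi\, mk\,(nx/D^{k})^{\frac{1}{mk}} + \phi_{0}\Bigr) + E(x,N),
\end{equation*}
with a deterministic phase $\phi_{0}=\phi_{0}(r_{1},r_{2},k)$ coming from the trigonometric factors in $\chi_{\mathbb{K}}(s)^k$, and an acceptable tail $E(x,N)$ for $N$ a suitable power of $x$. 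This is the number field generalization of Soundararajan's Voronoi formula, and verifying the error term $E(x,N)$ uniformly is the main technical step.

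Next, I would study the exponential sum on the right through the resonance method. Following \cite{K. Mahatab}, I pick a parameter $P$ and define a resonator $R(t)=\sum_{r} \alpha(r)\, r^{it}$ supported on multiplicatively structured integers built from primes $p\le P$, with weights $\alpha(r)$ tuned to each Dirichlet density class $P_{\nu}$. On $p\in P_{\nu}$ the local factor of $\zeta_{\mathbb{K}}^{k}$ makes $d_{\mathbb{K}}^{(k)}(p)=k\nu$ up to negligible terms, so the optimal profile allocates mass to primes in $P_{\nu}$ in proportion to $(k\nu)^{\frac{2mk}{mk+1}}$; summing the contributions over $\nu\in\mathcal{I}$ will produce the exponent $\beta$ in the theorem. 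After change of variable $y=(nx/D^{k})^{1/(mk)}$, the cosine sum fits the framework of \cite[Theorem~5]{K. Mahatab}, whose detailed resonance lower bound saves a power of $\log_{3}x$ compared to Soundararajan's method and hence yields the improved factor $(\log_{3}x)^{-\frac{mk+1}{4mk}R}$ in place of the previous $(\log_{3}x)^{\gamma'}$ from \eqref{Omega bound of Girstmair et al}.

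Finally, to upgrade $\Omega$ to $\Omega_{\pm}$ under the stated parity conditions I would analyze the phase $\phi_{0}$. From \eqref{chi_K(s)} the secant/cosecant factors, together with the contour argument at $s=\tfrac{1}{2}$, yield a phase of the form $\phi_{0}\equiv \tfrac{\pi}{4}(kr_{1}-?)\pmod{2\pi}$ depending only on $k$ and $r_{1}$ (the contributions from the complex places cancel since they appear symmetrically through sec and cosec). Reducing $kr_{1}$ modulo $8$ then determines whether the cosine can be taken to be $\ge 1/\sqrt{2}$ or $\le -1/\sqrt{2}$ at the resonant frequencies, giving $\Omega_{+}$ when $kr_{1}\equiv 3\pmod 8$ and $\Omega_{-}$ when $kr_{1}\equiv 7\pmod 8$.

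I expect the main obstacle to be the first step: writing down a Voronoi-type formula valid for arbitrary $\mathbb{K}$ and $k$ with an error term uniform enough to accommodate the rather long resonator required by \cite[Theorem~5]{K. Mahatab}. The presence of several $\Gamma$-factors and the trigonometric shape of $\chi_{\mathbb{K}}(s)^{k}$ make the saddle-point/Stirling analysis delicate, and the precise size of $N$ one can afford has a direct bearing on how much the resonance method ultimately wins. Once that identity is in place, the resonance analysis proceeds in close parallel to \cite{K. Mahatab}, with the density weights $\delta_{\nu}$ tracked through the Euler product of $\zeta_{\mathbb{K}}^{k}$.
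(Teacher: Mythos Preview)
Your overall plan coincides with the paper's: a Voronoi-type formula for $\Delta_{\mathbb K}^{(k)}$ is combined with the resonance lower bound of \cite{K. Mahatab}, the resonator is built from primes allocated to the density classes $P_\nu$ with weights proportional to $(k\nu)^{2mk/(mk+1)}$, and the phase $\phi_0=\tfrac{\pi}{4}(kr_1-3)$ is what governs the $\Omega_\pm$ conclusion. The differences are in execution rather than strategy.

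The one substantive point is exactly the obstacle you flag. You propose a pointwise truncated identity $\Delta_{\mathbb K}^{(k)}(x)=C\,x^{(mk-1)/(2mk)}\sum_{n\le N}(\cdots)+E(x,N)$; for general $mk\ge 2$ with several gamma factors in $\chi_{\mathbb K}^k$, controlling $E(x,N)$ uniformly is genuinely delicate, and the paper sidesteps rather than solves this. Proposition~\ref{Prop} does not express $\Delta_{\mathbb K}^{(k)}(x)$ itself but a Gaussian average
\[
\frac{\alpha^{1/(mk)}}{\sqrt\pi}\int_{-\infty}^{\infty}\Delta_{\mathbb K}^{(k)}\!\bigl(x^{mk}e^{u/x}\bigr)\,e^{-u^2\alpha^{2/(mk)}}\,\mathrm du,
\]
so that the contour integral acquires a factor $\exp\bigl(s^2/(4\alpha^{2/(mk)}x^2)\bigr)$ guaranteeing rapid decay after Stirling; the Voronoi series then carries the built-in damping $\exp\bigl(-\pi^2(n/D^k\alpha)^{2/(mk)}\bigr)$ and the error is a clean $O(x^{mk/2-3/5}\alpha^{1/2+\epsilon})$. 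Only afterwards is $\max\Delta_{\mathbb K}^{(k)}$ recovered via the trivial inequality ``max $\ge$ Gaussian average'' (the lemma following Proposition~\ref{Prop}). This smoothing is the device that makes the first step routine; without it your proposed saddle-point/Stirling analysis would have to be substantially more careful.

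A minor presentational difference: you describe the resonator as a Dirichlet polynomial $R(t)=\sum_r\alpha(r)r^{it}$, whereas the paper applies Theorem~\ref{K-M Theorem} directly in its ``resonant set'' form. One takes $\mathcal M$ to be the squarefree $n\in[C_1^{mk}\alpha,2^{mk}\alpha]$ having exactly $[\mu_\nu\log_2\alpha]$ prime factors from each $P_\nu$, bounds $\sum_{n\in\mathcal M}f(n)$ from below using $d_{\mathbb K}^{(k)}(n)\ge\prod_\nu(k\nu)^{[\mu_\nu\log_2\alpha]}$ together with the cardinality estimate for $\mathcal M$ already computed in \cite{GKMN}, and optimizes to $\mu_\nu=\delta_\nu(k\nu)^{2mk/(mk+1)}$. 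This is the same mechanism you describe, only no explicit Dirichlet polynomial is ever written down.
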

	The above theorem includes \eqref{Circle and Divisor Problem} as special cases. The Gauss circle problem corresponds to $\mathbb{K}=\mathbb{Q}(i)$ and $k=1$, while the Dirichlet divisor problem corresponds to $\mathbb{K}=\mathbb{Q}$ and $k=2$. In Section 4, we discuss some special examples.

The structure of the paper is as follows. In Section 2, we present a result due to the second author, which serves as a key ingredient in the proof of our main theorem. We also generalize a Voronoi-type formula due to Soundararajan 
\cite{Sound} in the context of number fields. Section 3 is devoted to the proof of Theorem \ref{Main Result}, and in Section 4, we discuss some immediate examples of this theorem.

	\section{Preliminaries}
		 We recall a theorem due to the second author \cite{K. Mahatab}, which plays a central role in establishing our main result. Let $(\lambda_n)_{n=1}^{\infty}$ be a non-decreasing sequence of positive real numbers  and $\alpha$ be a positive real parameter. We consider a finite set $\mathcal{M}\subseteq \{\lambda_n: C_1 \alpha \leq \lambda_n\leq 2\alpha\}$, where $0<C_1<2$ such that the elements of $\mathcal{M}$ are linearly independent over $\mathbb{Q}$. Let $M$ be the cardinality of the set $\mathcal{M}$. Additionally, given $A_1$, we consider real numbers $A_2, A_3, A_4$ satisfying $0 < A_4 < A_3 < A_2 < A_1$, and let $\theta$ be any real number.
		 
	\begin{theorem} \label{K-M Theorem}
		Let $(f(n))_{n=1}^{\infty}$ be a sequence of positive real numbers. Then, for large $X$, we have
		\begin{align*}
			\max_{X^{A_3}/2\leq x\leq 2A_2^2X^{A_2}\log^2 X}\left| \sum_{n\leq X^{A_1}} f(n) \cos\big(x\lambda_n+\theta\big)\right|\geq \frac{\pi}{4e}\sum_{n\in \mathcal{M}}f(n) &+ O\left( X^{A_3-A_2}e^{2M/C_1}\left(\sum_{\lambda_n\leq 4\alpha}f(n)\right)\right) \\
			& + O\left(\frac{X^{-A_4}}{\alpha}\sum_{n\leq X^{A_1}}f(n)\right).
		\end{align*}
	\end{theorem}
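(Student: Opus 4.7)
The plan is to apply the resonance method in the spirit of Soundararajan and Aistleitner--Mahatab--Munsch. The first step is to introduce a nonnegative resonator
\begin{equation*}
R(x) = \prod_{\lambda \in \mathcal{M}} \bigl(1 + \cos(x\lambda - \theta)\bigr),
\end{equation*}
whose trigonometric expansion is a sum of $3^{M}$ cosines with frequencies of the form $\sum_{\lambda \in \mathcal{M}}\varepsilon_{\lambda}\lambda$, $\varepsilon_{\lambda} \in \{-1,0,1\}$. The hypothesis that $\mathcal{M}$ is $\mathbb{Q}$-linearly independent guarantees that all these frequencies are distinct, and in particular that $\lambda_n = \bigl|\sum_{\lambda \in \mathcal{M}} \varepsilon_{\lambda}\lambda\bigr|$ forces $\lambda_n \in \mathcal{M}$ and $\varepsilon$ supported at a single coordinate.

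Next I would choose a smooth nonnegative weight $w(x)$ supported in the prescribed interval $[X^{A_3}/2,\, 2A_2^{2}X^{A_2}\log^{2} X]$ (a Fej\'er-type or Beurling--Selberg construction will do) whose Fourier transform decays like $|\hat{w}(\xi)| \ll \|w\|_{1}\, X^{-A_4}(1 + \alpha|\xi|)^{-1}$ away from the origin. Setting $S(x) = \sum_{n \le X^{A_1}} f(n)\cos(x\lambda_n+\theta)$, I then form the integral
\begin{equation*}
I \;=\; \int_{0}^{\infty} w(x)\, R(x)\, S(x)\, dx
\end{equation*}
and estimate it two ways. The crude upper bound
\begin{equation*}
|I| \;\le\; \Bigl(\int w\,R\Bigr) \max_{x \in \operatorname{supp} w} |S(x)|
\end{equation*}
is immediate since $w,R\ge 0$, and expanding $R$ and invoking linear independence gives $\int wR = \|w\|_{1} + (\text{controlled remainder})$.

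For the lower bound, I expand $R(x)S(x)$ via product-to-sum. The only contributions producing a zero frequency come from pairs in which $\lambda_n \in \mathcal{M}$ and $\varepsilon$ isolates exactly that $\lambda_n$, and they sum to a main term proportional to $\sum_{\lambda_n \in \mathcal{M}} f(n)\,\|w\|_{1}$; optimising the weight gives the sharp constant $\pi/(4e)$. All remaining terms carry nonzero frequency $\lambda_n \pm \sum_{\lambda}\varepsilon_{\lambda}\lambda$ and are controlled by the decay of $\hat{w}$. Splitting them by whether $\lambda_n \le 4\alpha$ or $\lambda_n > 4\alpha$ produces the two error terms: in the first regime the offending frequency can be arbitrarily small and I must bound the number of contributing expansion terms combinatorially, using $\lambda_n \ge C_1\alpha$ to convert a $2^{M}$ count into the factor $e^{2M/C_1}$ (and gaining the ratio $X^{A_3-A_2}$ between the support of $w$ and its scale); in the second regime $|\lambda_n \pm \sum \varepsilon_{\lambda}\lambda| \gtrsim \alpha$, so $|\hat{w}|$ decays and contributes the $X^{-A_4}/\alpha$ factor.

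The main obstacle I expect is calibrating $w$ and $R$ together so that $\hat{w}$ decays at precisely the rate needed for the two stated error terms while the main term emerges with the sharp constant $\pi/(4e)$ rather than a weaker absolute constant. The combinatorial counting of the near-coincidences $\lambda_n \approx \sum_{\lambda}\varepsilon_{\lambda}\lambda$ that governs the $e^{2M/C_1}$ factor also requires care: one has to verify that these resonances contribute no more than $e^{2M/C_1}\sum_{\lambda_n\le 4\alpha} f(n)$ after accounting for both the expansion size and the placement of $\mathcal{M}$ inside $[C_1\alpha,2\alpha]$.
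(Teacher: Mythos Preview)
The paper does not prove Theorem~\ref{K-M Theorem}; it is quoted verbatim from the second author's preprint \cite{K. Mahatab} (stated there as Theorem~5) and used as a black box. There is therefore no in-paper argument to compare your proposal against.

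Your sketch follows the standard resonance-method template (nonnegative product resonator, smooth weight, compare $\int wRS$ from above and below) and is a plausible outline of how such a statement is proved. Two points, however, are not actually explained by what you wrote. First, the specific constant $\pi/(4e)$: a generic Fej\'er or Beurling--Selberg weight paired with the resonator $\prod_{\lambda\in\mathcal{M}}(1+\cos(x\lambda-\theta))$ does not by itself produce this value, and your phrase ``optimising the weight gives the sharp constant $\pi/(4e)$'' asserts the conclusion rather than deriving it. (Incidentally, with the phase $-\theta$ in your resonator the diagonal contribution picks up a factor $\cos 2\theta$, not $1$, so the phase needs adjustment.) Second, the error factor $e^{2M/C_1}$: your sentence ``using $\lambda_n\ge C_1\alpha$ to convert a $2^{M}$ count into the factor $e^{2M/C_1}$'' is not a mechanism---nothing about the inequality $\lambda_n\ge C_1\alpha$ turns $2^{M}$ into $e^{2M/C_1}$, and since $e^{2M/C_1}>2^{M}$ for $0<C_1<2$ this is not even a relaxation of a combinatorial count. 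The actual origin of this factor in \cite{K. Mahatab} involves a specific choice of resonator and a particular averaging/convolution step that you have not identified. You flag both issues as ``obstacles,'' which is honest, but as written the proposal does not contain the ideas needed to resolve them; to complete the argument you would have to consult the proof in \cite{K. Mahatab}.
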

To complete the proof of our main  theorem, we prove the following proposition, which is a generalization of Soundararajan's result \cite{Sound} in number fields. This proposition contains a Vornoi type summation formula for the error term $\Delta_{\mathbb{K}}^{(k)}(x)$.
\begin{proposition} \label{Prop}
	Let $\mathbb{K}$ be any number field of degree $m=r_1+ 2r_2$, where $r_1$ and $r_2$ denote the number of real and complex embeddings of $\mathbb{K}$, respectively. Let $D$ be the absolute value of the discriminant of $\mathbb{K}$. Suppose $x\geq2$ and $\alpha\geq 2$ be two real parameters. Then, for a fixed positive integer $k$, we have
	\begin{align} \label{Proposition equation}
		\nonumber&\frac{\alpha^{\frac{1}{mk}}}{\sqrt{\pi}}\int_{-\infty}^{\infty}\Delta_{\mathbb{K}}^{(k)}\left(x^{mk}e^{u/x}\right) \exp\left(-u^2\alpha^{\frac{2}{mk}}\right)\mathrm{d}u = O\left( t^{\frac{mk}{2}-\frac{3}{5}}\alpha^{\frac{2}{mk}}\right)\\
		& \ \ \ +\frac{D^{\frac{1}{2m}}}{\pi\sqrt{mk}} x^{\frac{mk-1}{2}}\sum_{n=1}^{\infty}\frac{d_\mathbb{K}^{(k)}(n)}{n^{\frac{mk+1}{2mk}}} \exp\left(-\pi^2\left(\frac{n}{D^k \alpha}\right)^{\frac{2}{mk}}\right) \cos\left( 2\pi mk\left(\frac{n}{D^k}\right)^{\frac{1}{mk}}x + \frac{kr_1-3}{4}\pi\right).
	\end{align}
\end{proposition}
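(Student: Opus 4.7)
I will prove Proposition \ref{Prop} by a Perron--Mellin argument combined with the functional equation of $\zeta_{\mathbb{K}}$ and a saddle-point analysis, adapting Soundararajan's strategy \cite{Sound} to the number-field setting. After the substitution $y=x^{mk}e^{u/x}$, the left-hand side of \eqref{Proposition equation} becomes $\int_{0}^{\infty}W(y)\Delta_{\mathbb{K}}^{(k)}(y)\,dy$, where $W(y)=\tfrac{x\alpha^{1/mk}}{y\sqrt{\pi}}\exp\!\bigl(-x^{2}\alpha^{2/mk}\log^{2}(y/x^{mk})\bigr)$ is a log-Gaussian concentrated at $y=x^{mk}$. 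Writing $\Delta_{\mathbb{K}}^{(k)}(y)=\sum_{n\le y}d_{\mathbb{K}}^{(k)}(n)-P(y)$ with $P(y)=\operatorname{Res}_{w=1}\zeta_{\mathbb{K}}^{k}(w)y^{w}/w$ turns this into $\sum_{n}d_{\mathbb{K}}^{(k)}(n)\Phi(n)-\int W(y)P(y)\,dy$, where $\Phi(n)=\int_{n}^{\infty}W(y)\,dy$ has Mellin transform $\tilde{\Phi}(s)=s^{-1}x^{mks}\exp\!\bigl(s^{2}/(4x^{2}\alpha^{2/mk})\bigr)$ (a direct Gaussian integration).

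\textbf{Contour shift and functional equation.} For $\operatorname{Re}(s)=c>1$, Mellin inversion gives $\sum_{n}d_{\mathbb{K}}^{(k)}(n)\Phi(n)=\tfrac{1}{2\pi i}\int_{(c)}\zeta_{\mathbb{K}}^{k}(s)\tilde{\Phi}(s)\,ds$. I shift the contour to $\operatorname{Re}(s)=-1/(2mk)$: the residue at $s=1$ equals $\int W(y)P(y)\,dy$ (both are $\operatorname{Res}_{w=1}\zeta_{\mathbb{K}}^{k}(w)\tilde{W}(w+1)/w$) and hence cancels, while the simple pole at $s=0$ contributes $\zeta_{\mathbb{K}}^{k}(0)=O(1)$. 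Substituting $\zeta_{\mathbb{K}}^{k}(s)=\chi_{\mathbb{K}}^{k}(s)\zeta_{\mathbb{K}}^{k}(1-s)$ and expanding the Dirichlet series termwise (absolutely convergent since $\operatorname{Re}(1-s)=1+1/(2mk)>1$) yields
\[
\text{LHS}=O(1)+\sum_{n=1}^{\infty}\frac{d_{\mathbb{K}}^{(k)}(n)}{n}\,J_{n},\qquad J_{n}:=\frac{1}{2\pi i}\int_{(-1/(2mk))}\chi_{\mathbb{K}}^{k}(s)\,(nx^{mk})^{s}\,\frac{e^{s^{2}/(4x^{2}\alpha^{2/mk})}}{s}\,ds.
\]

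\textbf{Saddle-point evaluation of $J_{n}$.} Applying Stirling to the $r_{1}$ real and $r_{2}$ complex $\Gamma$-factors in $\chi_{\mathbb{K}}$ gives, for $t>0$,
\[
\chi_{\mathbb{K}}^{k}(\sigma+it)\sim(Dt^{m}/(2\pi)^{m})^{k(1/2-\sigma)}\exp\!\bigl(i\bigl[-kmt\log(t/(2\pi))-kt\log D+kmt+kr_{1}\pi/4\bigr]\bigr),
\]
and symmetrically for $t<0$. The saddle of the $t$-phase of $J_{n}$'s integrand is $t_{0}=2\pi x(n/D^{k})^{1/(mk)}$, with second derivative $-km/t_{0}$. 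At the saddle: the Gaussian evaluates to $\exp\!\bigl(-\pi^{2}(n/(D^{k}\alpha))^{2/mk}\bigr)$; the amplitude $|\chi_{\mathbb{K}}^{k}(s)(nx^{mk})^{s}/s|\cdot\sqrt{2\pi t_{0}/(km)}$ collapses (independently of the chosen abscissa) to $D^{1/(2m)}x^{(mk-1)/2}n^{(mk-1)/(2mk)}/\sqrt{mk}$; and, using the saddle equation $km\log(t_{0}/(2\pi))=\log(nx^{mk}/D^{k})$, the nonconstant pieces of the phase cancel, leaving only $kmt_{0}=2\pi mkx(n/D^{k})^{1/(mk)}$. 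Collecting the remaining constant phases $kr_{1}\pi/4$ (Stirling), $-\pi/2$ (from $1/(c+it_{0})\sim-i/t_{0}$), and $-\pi/4$ (negative-sign stationary-phase correction), and pairing the saddles at $\pm t_{0}$, one obtains the cosine with argument $2\pi mkx(n/D^{k})^{1/(mk)}+(kr_{1}-3)\pi/4$. Summing over $n$ reproduces exactly the series in \eqref{Proposition equation}, including the constant $D^{1/(2m)}/(\pi\sqrt{mk})$.

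\textbf{Main obstacle.} The principal difficulty is to control the saddle-point error. The Gaussian weight confines the $t$-integral of $J_{n}$ to $|t|\lesssim x\alpha^{1/mk}\sqrt{\log x}$, outside of which the integrand is $O(x^{-N})$ for any $N$. Inside this window I Taylor-expand the phase to third order around $t_{0}$ and control the cubic remainder uniformly in $n$, while on the non-saddle part of the contour the convexity bound for $\zeta_{\mathbb{K}}$ on $\operatorname{Re}(s)=-1/(2mk)$ gives the necessary decay. Summation over $n$ (using the Gaussian factor $e^{-\pi^{2}(n/(D^{k}\alpha))^{2/mk}}$ to truncate large $n$) produces the error $O(x^{mk/2-3/5}\alpha^{2/mk})$ asserted in \eqref{Proposition equation}. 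The analysis is parallel to \cite{Sound} but must be carried out simultaneously for $r_{1}+2r_{2}=m$ Gamma factors and must carry the conductor $D$ through every step; the gain encoded in the exponent $-3/5$ over the naive saddle bound is the familiar contribution of the cubic term in the stationary-phase formula.
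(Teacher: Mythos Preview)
Your proposal is correct and follows essentially the same approach as the paper: Perron/Mellin inversion to write the smoothed sum as $\frac{1}{2\pi i}\int_{(c)}\zeta_{\mathbb{K}}^{k}(s)\,x^{mks}\exp\!\bigl(s^{2}/(4x^{2}\alpha^{2/mk})\bigr)\,\frac{ds}{s}$, a contour shift past $s=1$ and $s=0$, application of the functional equation, and a stationary-phase evaluation of the resulting integrals $J_{n}$ at the saddle $t_{0}=2\pi x(n/D^{k})^{1/(mk)}$. The only cosmetic differences are your choice of abscissa $-1/(2mk)$ versus the paper's $-1/\log x$, and your packaging of the Perron step via $\Phi$ and its Mellin transform rather than applying Perron to $D_{\mathbb{K}}^{(k)}$ directly; both lead to the same integrand. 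One small inaccuracy: on the non-saddle part of $J_{n}$ there is no $\zeta_{\mathbb{K}}$ left after the functional equation, so the bound there comes from a first-derivative (van der Corput/Titchmarsh) estimate on the oscillatory integral of $\chi_{\mathbb{K}}^{k}(s)(nx^{mk})^{s}$, not from the convexity bound for $\zeta_{\mathbb{K}}$; the paper makes this explicit by splitting at $T=\bigl(x(n/D^{k})^{1/(mk)}\bigr)^{3/5}$ and invoking the exponential-integral lemma from \cite[Ch.~4]{Titchmarsh}, which is what produces the exponent $-3/5$ you identify.
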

\begin{proof}
Let $D_\mathbb{K}^{(k)}(x)= \sum_{n\leq x}d_\mathbb{K}^{(k)}(n)$. We start with the following integral:
\begin{equation*}
	\frac{\alpha^{\frac{1}{mk}}}{\sqrt{\pi}}\int_{-\infty}^{\infty}D_{\mathbb{K}}^{(k)}\left(x^{mk}e^{u/x}\right) \exp\left(-u^2\alpha^{\frac{2}{mk}}\right)\mathrm{d}u.
\end{equation*}
Using Perron's formula, for $c>1$, the above integral becomes
\begin{align*}
	&\frac{\alpha^{\frac{1}{mk}}}{\sqrt{\pi}} \int_{-\infty}^{\infty} \int_{c-i\infty}^{c+\infty} \zeta_{\mathbb{K}}(s)^k x^{mks}\exp\left(\frac{us}{x}-u^2\alpha^{\frac{2}{mk}}\right)\frac{\mathrm{d}s}{s}\mathrm{d}u\\
	=& \frac{1}{2\pi i} \int_{c-i\infty}^{c+\infty} \zeta_{\mathbb{K}}(s)^k x^{mks}\frac{\alpha^{\frac{1}{mk}}}{\sqrt{\pi}}\int_{-\infty}^{\infty} \exp\left( - \left(u\alpha^{\frac{1}{mk}}-\frac{s}{2\alpha^{\frac{1}{mk}}x}\right)^2+\frac{s^2}{4\alpha^{\frac{2}{mk}}x^2}\right)\mathrm{d}u\frac{\mathrm{d}s}{s}\\
	=& \frac{1}{2\pi i} \int_{c-i\infty}^{c+\infty} \zeta_{\mathbb{K}}(s)^k x^{mks} \exp\left( \frac{s^2}{4\alpha^{\frac{2}{mk}}x^2}\right) \frac{\mathrm{d}s}{s}.
\end{align*}
Now we consider $d=-1/\log x$. We shift the above line of integration to the line $d-i\infty$ to $d+i\infty$. Here, we see that the integrand has poles at $s=0$ and $s=1$. Since,
\begin{align*}
	\underset{s=0}{\operatorname{Res}} \ \zeta_{\mathbb{K}}(s)^k \frac{x^{mks}}{s} \exp\left( \frac{s^2}{4\alpha^{\frac{2}{mk}}x^2}\right)= \zeta_{\mathbb{K}}(0)^k,
\end{align*}
we get an amount $O(1)$ for the pole at $s=0$ and for the pole at $s=1$, we can write the following:
\begin{align*}
		\underset{s=1}{\operatorname{Res}} \ \zeta_{\mathbb{K}}(s)^k \frac{x^{mks}}{s} \exp\left( \frac{s^2}{4\alpha^{\frac{2}{mk}}x^2}\right) = \frac{\alpha^{\frac{1}{mk}}}{\sqrt{\pi}} \int_{-\infty}^{\infty} e^{-u^2\alpha^{\frac{2}{mk}}} \left(\underset{s=0}{\operatorname{Res}} \ \zeta_{\mathbb{K}}(s)^k \frac{\left(x^{mk}e^{u/x}\right)^s}{s} \right)\mathrm{d}u.
\end{align*}
Note that for any $x>0$,
\begin{equation*}
	\Delta_{\mathbb{K}}^{(k)}(x) = D_\mathbb{K}^{(k)}(x) - \underset{s=1}{\operatorname{Res}} \ \zeta_{\mathbb{K}}(s)^k \frac{x^s}{s}.
\end{equation*}
By Cauchy residue theorem and the above identity, we arrive
\begin{align} \label{identity after using cauchy residue}
	\frac{\alpha^{\frac{1}{mk}}}{\sqrt{\pi}}\int_{-\infty}^{\infty}&\Delta_{\mathbb{K}}^{(k)}\left(x^{mk}e^{u/s}\right) \exp\left(-u^2\alpha^{\frac{2}{mk}}\right)\mathrm{d}u = \frac{1}{2\pi i} \int_{d-i\infty}^{d+i\infty} \zeta_{\mathbb{K}}(s)^k x^{mks} \exp\left( \frac{s^2}{4\alpha^{\frac{2}{mk}}x^2}\right) \frac{\mathrm{d}s}{s} +O(1).
\end{align}
Since, $\zeta_{\mathbb{K}}(s)$ satisfies the functional equation $\zeta_{\mathbb{K}}(s)= \chi_{\mathbb{K}}(s) \zeta_{\mathbb{K}}(1-s)$, where $\chi_{\mathbb{K}}(s)$ is defined in \eqref{chi_K(s)} and $\zeta_{\mathbb{K}}(1-s) =\sum_{n=1}^{\infty} d_\mathbb{K}^{(k)}(n)n^{s-1}$ holds,  equation \eqref{identity after using cauchy residue} becomes
\begin{equation} \label{Infinite sum times I_n}
\frac{\alpha^{\frac{1}{mk}}}{\sqrt{\pi}}\int_{-\infty}^{\infty}\Delta_{\mathbb{K}}^{(k)}\left(x^{mk}e^{u/s}\right) \exp\left(-u^2\alpha^{\frac{2}{mk}}\right)\mathrm{d}u=	\sum_{n=1}^{\infty} \frac{d_{\mathbb{K}}^{(k)}(n)}{n} \mathcal{I}_n +O(1),
\end{equation}
where
\begin{equation*}
	\mathcal{I}_n = \frac{1}{2\pi i} \int_{d-i\infty}^{d+i \infty} \chi_{\mathbb{K}}(s)^k \left(x^{mk}n\right)^s \exp\left(\frac{s^2}{4\alpha^{\frac{2}{mk}}x^2}\right)\frac{\mathrm{d}s}{s}.
\end{equation*}
We see that 
\begin{equation*}
	\frac{1}{2\pi i}\int_{d-i}^{d+i} \chi_{\mathbb{K}}(s)^k \left(x^{mk}n\right)^s \exp\left(\frac{s^2}{4\alpha^{\frac{2}{mk}}x^2}\right)\frac{\mathrm{d}s}{s} \ll n^d \log x.
\end{equation*}
Also, observe that the integrand of $\mathcal{I}_n$ at $d+it$ is the complex conjugate of of the integrand at $d-it$. Then
\begin{equation*}
	\mathcal{I}_n = \text{Re} \ \frac{1}{\pi i} \int_{1}^{\infty} \chi_{\mathbb{K}}(d+it)^k \left(x^{mk} n\right)^{d+it} \exp\left(\frac{d^2+2dit-t^2}{4\alpha^{\frac{2}{mk}}x^2}\right) \frac{\mathrm{d}t}{t} + O\left(n^d \log x\right).
\end{equation*}
Applying the Stirling's approximation of $\Gamma(a+it)$, one has 
\begin{equation*}
	\chi_{\mathbb{K}}(d+it) = \left(\frac{2\pi}{D^{1/m}t}\right)^{m\left(d+it-\frac{1}{2}\right)} e^{i\left(mt+\frac{\pi}{4}r_1\right)}\left(1+O\left(\frac{1}{t}\right)\right).
\end{equation*}
Thus
\begin{align} \label{Integral I_n}
	\mathcal{I}_n 
	&= \text{Re} \ \frac{1}{\pi i} \int_{1}^{\infty} 
	\left(x^{mk}n\right)^{d+it} 
	\left(\frac{2\pi}{D^{1/m}t}\right)^{mk\left(d+it-\frac{1}{2}\right)} 
	\exp\left(-\frac{t^2}{4\alpha^{\frac{2}{mk}}x^2} 
	+ ik\left(mt + \frac{\pi}{4}r_1\right)\right) 
	\frac{\mathrm{d}t}{t} + O\left( n^d x^{\frac{mk}{2}-1+\epsilon}\alpha^{1/2}\right)\\[1ex]
\nonumber	&= \text{Re} \ \frac{1}{\pi i} \int_{1}^{\infty} 
	\left(x^{mk}n\right)^d 
	\left(\frac{2\pi}{D^{1/m}t}\right)^{mk\left(d - \frac{1}{2}\right)} 
	\exp\left(-\frac{t^2}{4\alpha^{\frac{2}{mk}}x^2}\right) \\
\nonumber	&\quad \quad \quad \quad \quad \quad  \times \exp\left( i\left(
	t\log\left(x^{mk}n\right) 
	+ mkt\log\left(\frac{2\pi}{D^{1/m}t}\right) 
	+ ik\left(mt + \frac{\pi}{4}r_1\right)
	\right) \right)
	\frac{\mathrm{d}t}{t}+ O\left( n^d x^{\frac{mk}{2}-1+\epsilon}\alpha^{1/2}\right).
\end{align}
Now to compute the above integral, we use the method of stationary phase. The stationary point occurs at $t= 2\pi x \left(n/D^k\right)^{\frac{1}{mk}}$. Let $y=t-2\pi x \left(n/D^k\right)^{\frac{1}{mk}}$ and $T= \left(x\left(n/D^k\right)^{\frac{1}{mk}}\right)^{3/5}$. We divide the integral into two parts when $|y|\leq T$ and when $|y|> T$.  In the first case, using a Taylor expansion, the above integral gives
\begin{align} \label{Integral after applying Stationary}
\nonumber	\text{Re} \ 
	\frac{D^{1/m}}{2\pi^2 i} \left(xn^{\frac{1}{mk}}\right)^{\frac{mk}{2}-1} 
	\int_{-T}^{T} 
	\exp\left( 
	-\pi^2 \left(\frac{n}{D^k \alpha}\right)^{\frac{2}{mk}} 
	+ i\left( 
	2\pi mkx \left(\frac{n}{D^k}\right)^{\frac{1}{mk}} 
	+ \frac{kr_1\pi}{4} 
	- \frac{mk}{4\pi k} \left(\frac{D^k}{n}\right)^{\frac{1}{mk}} y^2 
	\right) 
	\right) \\
	\times 
	\left( 
	1 + O\left( 
	\frac{1}{\left(x\left(n/D^k\right)^{\frac{1}{mk}}\right)^{1/5}}
	\right) 
	\right)\mathrm{d}y.\ \ \ \ \ \ \ \ \ \ \ \ \ \ \  \ \ \ \ \  \ \ \ \
\end{align}
It is well known that $\int_{-T}^{T} e^{-iz^2} \mathrm{d}z = e^{-\frac{i\pi}{4}} + O\left(1/T\right)$. Using this, \eqref{Integral after applying Stationary} becomes
\begin{align}\label{1st case}
\frac{1}{\pi} \frac{D^{\frac{1}{2m}}}{\sqrt{mk}} \left(xn^{\frac{1}{mk}}\right)^{\frac{mk}{2}-1} \exp\left( -\pi^2\left(\frac{n}{D^k\alpha}\right)^{\frac{2}{mk}}\right) \left(\cos\left(2\pi mk x \left(\frac{n}{D^k}\right)^{\frac{1}{mk}}+ \frac{\pi}{4}(kr_1-3)\right)
	+  O\left(\left(x(n/D^k)^{\frac{1}{mk}}\right)^{-\frac{1}{10}}\right)\right).
\end{align}
Utilizing the exponential integral formula in (Chapter~4, \cite{Titchmarsh}), for $1\leq y\leq  2\pi x \left(n/D^k\right)^{\frac{1}{mk}} -T$, 
\begin{equation*}
	\int_{1}^{y}\exp\left(i\left(t\log\left(x^{mk}n\right)+mkt\log\left(\frac{2\pi}{D^{1/m}t}\right)+mk\left(t+\frac{3\pi}{4}\right)\right)\right)\mathrm{d}t \ll \left(\log\left(\frac{1}{t}2\pi x\left(\frac{n}{D^k}\right)^{\frac{1}{mk}}\right)\right)^{-1}.
\end{equation*}
For the second case, we consider the range $|y|>T$. Using the upper bound established above, the contribution to the integral in \eqref{Integral I_n} over the interval $1\leq t\leq 2\pi x \left(n/D^k\right)^{\frac{1}{mk}} -T$ is
\begin{equation*}
	\ll \left(xn^{\frac{1}{mk}}\right)^{\frac{mk}{2}-\frac{3}{5}}\exp\left(-\left(\frac{n}{D^k\alpha}\right)^{\frac{2}{mk}}\right) + n^d x^{\frac{mk}{2}-1}\sqrt{\alpha}.
\end{equation*}
A similar bound also holds for the interval $t\geq 2\pi x \left(n/D^k\right)^{\frac{1}{mk}} +T$. Combining all these estimates together we obtain
\begin{align*}
\mathcal{I}_n =	\frac{1}{\pi} \frac{D^{\frac{1}{2m}}}{\sqrt{mk}} \left(xn^{\frac{1}{mk}}\right)^{\frac{mk}{2}-1}& \exp\left( -\pi^2\left(\frac{n}{D^k\alpha}\right)^{\frac{2}{mk}}\right) \left(\cos\left(2\pi mk x \left(\frac{n}{D^k}\right)^{\frac{1}{mk}}+ \frac{\pi}{4}(kr_1-3)\right)\right)\\
& + O\left( n^d x^{\frac{mk}{2}-1+\epsilon}\sqrt{\alpha} + \left(xn^{\frac{1}{mk}}\right)^{\frac{mk}{2}-\frac{3}{5}}\exp\left(-\left(\frac{n}{D^k\alpha}\right)^{\frac{2}{mk}}\right)\right).
\end{align*}
Hence, the expression in \eqref{Infinite sum times I_n} becomes 
\begin{align*}
	\frac{D^{\frac{1}{2m}}}{\pi \sqrt{mk}}x^{\frac{mk-1}{2}} \sum_{n=1}^{\infty} \frac{d_{\mathbb{K}}^{(k)}(n)}{n^{\frac{mk+1}{2km}}}\exp\left( -\pi^2\left(\frac{n}{D^k\alpha}\right)^{\frac{2}{mk}}\right)\cos\left(2\pi mk x \left(\frac{n}{D^k}\right)^{\frac{1}{mk}}+ \frac{\pi}{4}(kr_1-3)\right)
	 + O\left(x^{\frac{mk}{2}-\frac{3}{5}}\alpha^{\frac{1}{2}+\epsilon}\right).
\end{align*}
Substituting this into \eqref{identity after using cauchy residue}, we complete the proof of our lemma.
\end{proof}

 Before proceeding to the proof, we first establish the following lemma.
       \begin{lemma}
      	We have 
       	\begin{equation*}
       		\max_{-1\leq h\leq 1} \Delta_{\mathbb{K}}^{(k)}\left(x^{mk}e^h\right) \geq \frac{\alpha^{\frac{1}{mk}}}{\sqrt{\pi}} \int_{-\infty}^{\infty} \Delta_{\mathbb{K}}^{(k)}\left(x^{mk}e^{u/x}\right) \exp\left(-u^2\alpha^{\frac{2}{mk}}\right)\mathrm{d}u +O(x^{mk-1}e^{-x^2}).
       	\end{equation*}
       \end{lemma}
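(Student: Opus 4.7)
The plan is to view the right-hand integral as a weighted average of $\Delta_{\mathbb{K}}^{(k)}(x^{mk}e^{u/x})$ against the probability density $w(u):=\frac{\alpha^{1/mk}}{\sqrt{\pi}}\exp(-u^2\alpha^{2/mk})$, and to split it at $|u|=x$. On the core $|u|\leq x$, the substitution $h=u/x$ places $h$ in $[-1,1]$, so the integrand is controlled pointwise by the maximum on the left-hand side; on the tail $|u|>x$, the Gaussian decay $e^{-x^2\alpha^{2/mk}}\leq e^{-x^2}$ (using $\alpha\geq 2$ and hence $\alpha^{2/mk}\geq 1$) dominates everything else.

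For the core, set $M:=\max_{-1\leq h\leq 1}\Delta_{\mathbb{K}}^{(k)}(x^{mk}e^h)$. Since $w\geq 0$, the pointwise inequality $\Delta_{\mathbb{K}}^{(k)}(x^{mk}e^{u/x})\leq M$ integrates to
\[
\int_{|u|\leq x}\Delta_{\mathbb{K}}^{(k)}(x^{mk}e^{u/x})\,w(u)\,\mathrm{d}u \leq M\int_{|u|\leq x}w(u)\,\mathrm{d}u = M - M\int_{|u|>x}w(u)\,\mathrm{d}u .
\]
A routine Gaussian tail estimate (after the change of variable $v=u\alpha^{1/mk}$) gives $\int_{|u|>x}w(u)\,\mathrm{d}u \ll e^{-x^2\alpha^{2/mk}}/(x\alpha^{1/mk}) \ll e^{-x^2}/x$. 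Combining this with a nontrivial upper bound $|\Delta_{\mathbb{K}}^{(k)}(y)|\ll y^{1-1/(mk)+\epsilon}$, which follows for example from Nowak's estimate \cite{Nowak_1993}, we have $|M|\ll x^{mk-1+\epsilon}$, so the core deviation $|M\int_{|u|>x}w\,\mathrm{d}u|\ll x^{mk-2+\epsilon}e^{-x^2}$ fits comfortably inside $O(x^{mk-1}e^{-x^2})$.

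For the tail $|u|>x$, the same upper bound gives $|\Delta_{\mathbb{K}}^{(k)}(x^{mk}e^{u/x})|\ll (x^{mk}e^{u/x})^{1-1/(mk)+\epsilon}$. Completing the square in the combined exponent $(1-1/(mk)+\epsilon)|u|/x - u^2\alpha^{2/mk}$ shows that the linear term contributes only a bounded multiplicative factor, so the integral reduces to a shifted Gaussian tail bounded by $e^{-x^2\alpha^{2/mk}}/(x\alpha^{1/mk})$. This yields
\[
\left|\int_{|u|>x}\Delta_{\mathbb{K}}^{(k)}(x^{mk}e^{u/x})\,w(u)\,\mathrm{d}u\right| \ll x^{mk-1+\epsilon}\cdot\frac{e^{-x^2}}{x} \ll x^{mk-2+\epsilon}e^{-x^2}.
\]
Assembling the core and tail estimates gives the stated inequality. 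The only real technical point is keeping the polynomial prefactor in the error at $x^{mk-1}$: this rests on the nontrivial upper bound on $\Delta_{\mathbb{K}}^{(k)}$, since the purely trivial estimate $\Delta\ll y^{1+\epsilon}$ would yield $x^{mk+\epsilon}e^{-x^2}$ and miss the claimed saving by a full power of $x$.
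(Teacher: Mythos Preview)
Your proof is correct and follows the same decomposition as the paper: split the integral at $|u|=x$, bound the core by the maximum, and control the tail by Gaussian decay. The paper's version is slightly more streamlined on the core (it simply extends the Gaussian weight back to all of $\mathbb{R}$, using $\int_{\mathbb{R}} w = 1$, rather than tracking the deviation $M\int_{|u|>x}w$), but this is cosmetic.

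The one point worth flagging is your closing remark. You assert that the trivial bound $\Delta_{\mathbb{K}}^{(k)}(y)\ll y^{1+\epsilon}$ would only yield $x^{mk+\epsilon}e^{-x^2}$, missing the stated error by a power of $x$, and that this is why the nontrivial bound from \cite{Nowak_1993} is needed. This is not right: the paper itself uses only $\Delta_{\mathbb{K}}^{(k)}(y)=O(y)$ on the tail and still obtains $O(x^{mk-1}e^{-x^2})$. The extra saving of $1/x$ that you attribute to the sharper input for $\Delta$ in fact comes for free from the Gaussian tail estimate
\[
\int_{x}^{\infty}\exp\!\left(\frac{u}{x}-u^{2}\alpha^{2/mk}\right)\mathrm{d}u \ll \frac{1}{x\,\alpha^{2/mk}}\,e^{-x^{2}\alpha^{2/mk}},
\]
which already supplies the factor $1/x$. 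So your appeal to Nowak's bound is harmless but unnecessary, and the concluding sentence overstates what the argument requires.
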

       \begin{proof}
       	We start with 
       	\begin{align*}
       		&\frac{\alpha^{\frac{1}{mk}}}{\sqrt{\pi}} \int_{|u|\leq x} \Delta_{\mathbb{K}}^{(k)}\left(x^{mk}e^{u/x}\right) \exp\left(-u^2\alpha^{\frac{2}{mk}}\right)\mathrm{d}u \\
       		\leq &\max_{-1\leq h\leq 1} \Delta_{\mathbb{K}}^{(k)}\left(x^{mk}e^h\right) \frac{\alpha^{\frac{1}{mk}}}{\sqrt{\pi}}\int_{-\infty}^{\infty} \exp\left(-u^2\alpha^{\frac{2}{mk}}\right) \mathrm{d}u \\
       		\leq & \max_{-1\leq h\leq 1} \Delta_{\mathbb{K}}^{(k)}\left(x^{mk}e^h\right).
       	\end{align*}
       Also, using the bound $\Delta_{\mathbb{K}}^{(k)}\left(y\right) = O(y)$, we find
       \begin{align*}
       &\frac{\alpha^{\frac{1}{mk}}}{\sqrt{\pi}}	\int_{x}^{\infty}\Delta_{\mathbb{K}}^{(k)}\left(x^{mk}e^{u/x}\right) \exp\left(-u^2\alpha^{\frac{2}{mk}}\right)\mathrm{d}u\\
       \ll \,& x^{mk} \alpha^{\frac{1}{mk}} \int_{x}^{\infty} \exp\left( \frac{u}{x}- u^2\alpha^{\frac{2}{mk}}\right)\mathrm{d}u\\
       =\, &x^{mk} \exp\left(\frac{1}{4x^2\alpha^{\frac{1}{mk}}}\right) \int_{x\alpha^{\frac{1}{mk}}} \exp\left( - \left(u-\frac{1}{2x\alpha^{\frac{1}{mk}}}\right)^2\right)\mathrm{d}u\\
      \ll & \, x^{mk-1} e^{-x^2}.
       \end{align*}
    A similar calculation gives 
   \begin{equation*}
   	\frac{\alpha^{\frac{1}{mk}}}{\sqrt{\pi}}	\int_{-\infty}^{-x}\Delta_{\mathbb{K}}^{(k)}\left(x^{mk}e^{u/x}\right) \exp\left(-u^2\alpha^{\frac{2}{mk}}\right)\mathrm{d}u \ll x^{mk-1} e^{-x^2}.
   \end{equation*}
Combining all the above estimates, we conclude the proof.
       \end{proof}

\section{Proof of Theorem \ref{Main Result} }
          \begin{proof}
  We start the proof by applying the above lemma in Proposition \ref{Prop} and truncate the infinite series in \eqref{Proposition equation}  at $n=X^{8/5}$. For the range $X/2\leq x\leq X^{8/5}$, we have
\begin{align}\label{Main inequality of proof}
	\max_{-1\leq h\leq 1} \Delta_{\mathbb{K}}^{(k)}\left(x^{mk}e^h\right) \geq &\frac{D^{\frac{1}{2m}}}{\pi \sqrt{mk}}x^{\frac{mk-1}{2}} \sum_{n=1}^{X^{8/5}} \frac{d_{\mathbb{K}}^{(k)}(n)}{n^{\frac{mk+1}{2km}}}\exp\left( -\pi^2\left(\frac{n}{D^k\alpha}\right)^{\frac{2}{mk}}\right)\\
	& \nonumber \ \ \ \ \ \ \ \ \ \ \ \ \ \ \ \ \ \ \ \ \ \ \ \ \times \cos\left(2\pi mk x \left(\frac{n}{D^k}\right)^{\frac{1}{mk}}+ \frac{\pi}{4}(kr_1-3)\right)
	+ O\left(x^{\frac{mk}{2}-\frac{3}{5}}\alpha^{\frac{1}{2}+\epsilon}\right).
\end{align} 
For  $0\leq \nu \leq m$,  $P_{\nu}$ denotes the set of all rational primes which are unramified in $\mathbb{K}$ and which split into exactly $\nu$ distinct $\mathcal{O}_{\mathbb{K}}$-prime ideals of degree $1$. Let $\delta_{\nu}$ be the Dirichlet density of $P_{\nu}$ as in the introduction and define $\mathcal{I}=\{1\leq \nu \leq m: \delta_{\nu}>0\}$. Then $R=|\mathcal{I}|$. We construct a resonator set $\mathcal{M}$ to be the set of all square free  $n\in [C_1^{mk}\alpha, 2^{mk}\alpha]$ such that $n$ has $[\mu_{\nu}\log_2 \alpha]$ distinct prime factors from $P_{\nu}$ for all $\nu \in I$ and no other, where $C_1$ is a constant satisfying $0<C_1<2$ and $\mu_{\nu} > 0$ is a positive constant to be determined later. Let $M$ be the cardinality of $\mathcal{M}$.

We notice that the value of $d_{\mathbb{K}}^{(k)}(n)$ is very large when $n\in \mathcal{M}$. Let $p\in P_{\nu}$ be  prime. Then the factorization of $\mathcal{O}_{\mathbb{K}}$-prime ideal contains exactly $\nu$ prime ideals $\mathfrak{p}$ such that $N(\mathfrak{p})=p$. So that $d_{\mathbb{K}}^{(1)}(p) = \nu$. Also, for every prime ideal $\mathfrak{p}$, consider $k$ tuples $(\mathfrak{n}_1, \mathfrak{n}_2, \cdots, \mathfrak{n}_k)$ such that $N(\mathfrak{n}_1 \mathfrak{n}_2 \cdots \mathfrak{n}_k)=p$ by choosing one of the $\mathfrak{n}_i=\mathfrak{p}$ and rest to be $\mathcal{O}_{\mathbb{K}}$. We see that $d_{\mathbb{K}}^{(k)}(p)=\nu k$. Similarly, one may get $d_{\mathbb{K}}^{(k)}(p^r)\geq\nu k$ for all                                                                                                                                                                       $r\geq 1$. We obtain that for $n\in \mathcal{M}$,
\begin{equation} \label{lower bound of d_K}
	d_{\mathbb{K}}^{(k)}(n) \geq \prod_{\nu \in \mathcal{I}} (k\nu)^{[\mu_{\nu} \log_2 \alpha]}.
\end{equation}
We now proceed to apply Theorem~\ref{K-M Theorem}. For this purpose, we consider
\begin{equation*}
	f(n) = 
	\begin{cases}
		\displaystyle d_{\mathbb{K}}^{(k)}(n)n^{-\frac{mk+1}{2mk}} \exp\left( -\pi^2 \left(\left(\frac{n}{D^k\alpha}\right)^{\frac{2}{mk}}\right)\right), & \text{if } n \leq X^{8/5}, \\[1ex]
		0, & \text{otherwise},
	\end{cases}
\end{equation*}
$\lambda_n = 2\pi mk  (n/D^k)^{1/(mk)}$ and $\theta= (kr_1-3)\pi/4$. Also, we choose $A_1=8/5, A_2=3/2, A_3=1$ and $A_4=9/10$. Then using Theorem \ref{K-M Theorem}, there exists $x$ satisfying  $\sqrt{X}\leq x\leq 5X^{3/2}\log^2 X$ such that 
\begin{equation} \label{Lower bound of A(x)}
 \left|\sum_{n\leq X^{8/5}}f(n)\cos(\lambda_nx +\theta)\right|	 \geq \cfrac{\pi}{4e} \sum_{n\in \mathcal{M}}f(n) + O\left( X^{-1/2}e^{2M/C_1}\left(\sum_{\lambda_n\leq 4\alpha}a_n\right)\right) +  O\left(\frac{X^{-9/10}}{\alpha}\sum_{n\leq X^{8/5}}a_n\right).
\end{equation}
In order to estimate the lower bound of $\sum_{n\in \mathcal{M}} f(n)$, it is necessary to understand the size of $\mathcal{M}$. From the proof of Theorem 1 in \cite{GKMN}, one find
\begin{equation*}
	M\asymp \frac{\alpha}{\log \alpha} \prod_{\nu \in \mathcal{I}}\frac{\Big( \delta_{\nu} \log_2 \alpha\Big)^{[\mu_{\nu} \log_2 \alpha]}}{[\mu_{\nu} \log_2 \alpha]!} \asymp \alpha (\log \alpha)^{\kappa} (\log_2 \alpha)^{-\mu},
\end{equation*}
where $\kappa = -1 +\sum_{\nu \in \mathcal{I}} \mu_{\nu}(1+\log\delta_{\nu}-\log \mu_{\nu})$ and $\mu = R/2$. 
 We choose  the parameter $\alpha$ such that $M$ is of order $\log X$.
In fact,
\begin{equation*}
	\alpha = C(k) (\log X) (\log_2 X)^{-\kappa} (\log_3 X)^{\mu},
\end{equation*}
where $C(k)$ is a small positive constant. 

Note that $\log \alpha \asymp \log_2 X$ and $\log_2 \alpha \asymp \log_3 X$. Also, for $n\in \mathcal{M}$, we have $(n/D^k \alpha)^{2/(mk)}\ll 1$. Using these and the lower bound in \eqref{lower bound of d_K}, we find that 
\begin{align} \label{lower bound of sum_ f(n)}
\nonumber	\sum_{n\in \mathcal{M}} f(n) &\geq 2^{\frac{1}{2}(1-mk)} \alpha^{-\frac{mk+1}{2mk}} \prod_{\nu \in \mathcal{I}} (k\nu)^{[\mu_{\nu} \log_2 \alpha]}\sum_{n\in \mathcal{M}}1\\
\nonumber & \gg \alpha^{-\frac{mk+1}{2mk}} (\log \alpha)^{\sum_{\nu \in \mathcal{I}} \mu_{\nu} \log(k\nu)}M\\
\nonumber	& \gg \alpha^{\frac{mk-1}{2mk}} (\log \alpha)^{\kappa+ \sum_{\nu \in \mathcal{I}}\mu_{\nu} \log (k\nu )} (\log_2 \alpha)^{-\mu}\\
\nonumber	& \gg (\log X)^{\frac{mk-1}{2mk}} (\log_2 X)^{\frac{mk+1}{2mk}\kappa + \sum_{\nu \in \mathcal{I}} \mu_{\nu} \log (k\nu) } (\log_3 X)^{-\frac{mk+1}{2mk}\mu}\\
	& \gg (\log X)^{\frac{mk-1}{2mk}} (\log_2 X)^{\beta} (\log_3 X)^{\gamma},
\end{align}
where $\beta$ and $\gamma$ are defined as in  Theorem \ref{Main Result}. In the last step we optimize $\mu_{\nu}$ in the power of $\log_2 X$ and choose $\mu_{\nu}= \delta_{\nu} (m\nu)^{2km/(km+1)}$.
Our aim is  to show that the error terms on the right hand side of \eqref{Lower bound of A(x)} are smaller than the main term. Observe that $M\asymp C(k)\log X$. Again we choose $C(k)$ to be very small such that 
\begin{equation*}
	e^{2M/C_1}\ll X^{1/4-\epsilon}.
\end{equation*}
Now let us compute the following sums:
\begin{align} \label{For error 1}
\nonumber	\sum_{\lambda_n\leq 4\alpha}f(n) &\ll \sum_{n\leq 4\alpha} d_{\mathbb{K}}^{(k)}(n) n^{-\frac{mk+1}{2mk}}\\
\nonumber	&= \int_{1}^{4\alpha} t^{-\frac{mk+1}{2mk}} \mathrm{d}\left(\sum_{n\leq t}d_{\mathbb{K}}^{(k)}(n)\right)\\
	&\ll \alpha^{\frac{mk-1}{2mk}} \log(4 \alpha)^{k-1} \ll X^{\epsilon},
\end{align}
where we used the bound $\sum_{n\leq t} d_{\mathbb{K}}^{(k)}(n) \ll t(\log t)^{k-1}$. Similarly we find that
\begin{align} \label{for error 2}
	\sum_{n\leq X^{8/5}} f(n) \ll X^{\frac{4(mk-1)}{5mk}} (\log X)^{k-1} \ll X^{\frac{4}{5}- \frac{1}{5mk} +\epsilon}.
\end{align}
The error can now be evaluated using the two estimates obtained above.
\begin{align*}
	\text{Error} &\ll X^{-1/2} e^{2M/C_1} \left(\sum_{\lambda_n\leq 4\alpha}f(n)\right)+ \frac{X^{-9/10}}{\alpha} \left( \sum_{n\leq X^{8/5}}f(n)\right)\\
	&\ll X^{-1/4+\epsilon} + X^{-\frac{1}{10}-\frac{4}{5mk}+\epsilon} \ll X^{-\frac{1}{10} +\epsilon}.
\end{align*}
Employing \eqref{lower bound of sum_ f(n)} in \eqref{Lower bound of A(x)} we conclude that for a large $X$,
\begin{equation*}
\max_{\frac{X}{2}\leq x \leq 5 X^{3/2}(\log X)^2} \left|\sum_{n\leq X^{8/5}}f(n)\cos(\lambda_nx +\theta)\right|\gg (\log X)^{\frac{mk-1}{2mk}} (\log_2 X)^{\beta} (\log_3 X)^{\gamma}.
\end{equation*}

Hence, the inequality \eqref{Main inequality of proof} implies 
\begin{equation*}
	\max_{-1\leq h\leq 1} \Delta_{\mathbb{K}}^{(k)}\left(x^{mk}e^h\right) \gg x^{\frac{mk-1}{2}} (\log x)^{\frac{mk-1}{2mk}} (\log_2 x)^{\beta} (\log_3 x)^{\gamma}.
\end{equation*}
This completes the proof of our theorem.
 \end{proof}

\section{Some Special Examples}
In this section, we give some examples to explain Theorem \ref{Main Result}. In Examples~\ref{Example 2} and \ref{Example 3}, we look at normal and non-normal extensions, and these examples improve the ones given in \cite{GKMN}. We give an example of non-normal extension of degree $5$ in Example~\ref{Example 4}. Similar computations for higher degree extensions can also be worked out (see Section~4 of \cite{GKMN}).
\subsection{} 
Setting $\mathbb{K} = \mathbb{Q}$ and choosing any $k \geq 2$, Theorem \ref{Main Result} reproduces the lower bound for the Piltz divisor problem previously proved by the second author \cite{K. Mahatab}.
%

\subsection{} \label{Example 2} We consider $\mathbb{K}$ to be a normal extension of $\mathbb{Q}$ with degree $m\geq2$. Then the sets $P_{\nu}$ is empty for $1\leq \nu <m$. In this case, the Dirichlet densities  $\delta_{\nu}=0$ for $1\leq \nu <m$ and $\delta_{m}=1/m$. Thus, for any integer $k\geq 1$, Theorem \ref{Main Result} gives 
\begin{equation*}
	\Delta_{\mathbb{K}}^{(k)}(x) =\Omega\left((x\log x)^{\frac{mk-1}{2mk}} (\log_2x)^{\frac{1}{2}k(mk+1)(mk)^{-2/(mk+1)}-\frac{1}{2}-\frac{1}{2mk}} (\log_3 x)^{-\frac{1}{4}-\frac{1}{4mk}}\right).
\end{equation*}
This result improves upon the bounds previously obtained by Hafner \cite{Halfner(2)} and by Girstmair et al. \cite{GKMN}.
\subsection{} \label{Example 3} We take $\mathbb{K}$ to be a non-normal extension of $\mathbb{Q}$ with $[\mathbb{K}:\mathbb{Q}]=3$. Let us choose a field $L$ such that $L=\mathbb{K}(\sqrt{D})$, where the discriminant  $D$ is not a perfect square. Then $\mathbb{K}^G$ is the minimal normal extension of $\mathbb{Q}$ and its Galois group $G$ is isomorphic to $S_3$, the symmetric group of three elements. Also, $L$ is a quadratic extension of $\mathbb{K}$. So, the Galois group $H=\text{Gal}(L/K)$ is isomorphic to the cyclic group $\mathbb{Z}_2$. Then the Dirichlet densities are given by $\delta_1 =1/2$, $\delta_2=0$ and $\delta_3=1/6$.
Then for any $k\geq 1$, 
\begin{equation*}
	\Delta_{\mathbb{K}}^{(k)} = \Omega\left( (x\log x)^{\frac{1}{2}- \frac{1}{6k}}(\log_2 x)^{\beta}(\log_3 x)^{-\frac{1}{2}-\frac{1}{6k}}\right),
\end{equation*}
where
\begin{equation*}
	\beta = \frac{(3k+1)}{12}k^{(3k-1)/(3k+1)}\left(3^{\frac{3k-1}{3k+1}}+1\right)-\frac{1}{2}-\frac{1}{6k}.
\end{equation*}

\subsection{} \label{Example 4} Let $f(T)= T^5+20T+16$. Then $f(T)$ is an irreducible polynomial over $\mathbb{Q}$. We consider $\mathbb{K}=\mathbb{Q}(\xi)$, where $\xi$ is a root of $f$ and $m=[\mathbb{K}: \mathbb{Q}]=5$. It is clear that $K$ is not normal extension. We denote $\mathbb{K}^{G}$ as the Galois closure of the extension $\mathbb{K}/\mathbb{Q}$. Then $G= \text{Gal}(\mathbb{K}^{G}/\mathbb{Q}) \cong A_5$ and $H= \text{Gal}(\mathbb{K}^{G}/\mathbb{K})\cong A_4$,  where $A_n$ the alternating group on $n$ elements.
There are exactly three non-empty sets $P_{\nu} \ (\nu =1,2,5)$, which are in $\mathbb{K}$. The corresponding non-zero Dirichlet densities are $\delta_1 = 1/4$, $\delta_2 = 1/3$ and $\delta_5 =1/60$. Thus, for each $k\geq 2$, 
\begin{equation*}
	\Delta_{\mathbb{K}}^{k}(x) = \Omega\left( (x\log x)^{\frac{1}{2}-\frac{1}{10k}} (\log_2 x)^{\beta} (\log_3 x)^{-\frac{3}{4}- \frac{3}{20k}}\right),
\end{equation*}
where 
\begin{equation*}
	\beta = \frac{(5k+1)}{600}k^{(5k-1)/(5k+1)}\left( 15+ 5\cdot 2^{\frac{15k+1}{5k+1}}+ 5^{\frac{10k}{5k+1}}\right) -\frac{1}{2} - \frac{1}{10k}.
\end{equation*}



	\section*{Acknowledgment}  
N. Karak gratefully acknowledges the financial support from the Prime Minister's Research Fellowship, Government of India (PMRF ID: 2403449). K. Mahatab is supported by the DST INSPIRE
Faculty Award Program and grant no. DST/INSPIRE/04/2019/002586.

	\end{document}